\newtheorem{theorem}{Theorem}[section]
\newtheorem*{theorem A}{Theorem A}
\newtheorem*{theorem B}{N\"olker's Theorem}
\newtheorem{lemma}{Lemma}[section]
\newtheorem{proposition}{Proposition}[section]
\newtheorem{corollary}{Corollary}[section]
\theoremstyle{remark}
\theoremstyle{remark}
\theoremstyle{definition}
\numberwithin{equation}{section}
\def \({\left ( }
\def \){\right )}
\def \<{\left < }
\def \>{\right >}
\begin{document}
\title{\textbf{Semiparalel Wintgen Ideal Surfaces in }$\mathbb{E}^{n}$}
\author{Bet\"{u}l Bulca \& Kadri Arslan}
\address{Uluda\u{g} University, Art and Science Faculty, Department of
Mathematics, Bursa-TURKEY}
\email{bbulca@uludag.edu.tr; arslan@uludag.edu.tr}
\thanks{This paper is supported by Uluda\u{g} University Research found with
Project No: KUAP(F)-2012/59}
\subjclass[2000]{ 53A05, 53C40, 53C42}
\date{May, 2013 }
\dedicatory{}
\keywords{Normal curvature, Wintgen ideal surface, Superconformal surface, \\
Semiparallel surface.}

\begin{abstract}
Wintgen ideal surfaces in $\mathbb{E}^{4}$ form an important family of
surfaces, namely surfaces with circular ellipse of curvature. Obviously,
Wintgen ideal surfaces satisfy the pointwise equality $K+\left \vert
K_{N}\right \vert =\left \Vert H\right \Vert ^{2}.$ In the present study we
consider the Wintgen ideal surfaces in $n$-dimensional Euclidean space $%
\mathbb{E}^{n}.$ We have shown that Wintgen ideal surfaces in $\mathbb{E}%
^{n} $ satisfying the semiparallelity condition $\overline{R}(X,Y)\cdot h=0$
are totally umbilical. Further, we obtain some results in $\mathbb{E}^{4}.$
\end{abstract}

\maketitle

\section{Introduction}

In 1979, Wintgen \cite{Wi} proved a basic relation between the intrinsic
Gauss curvature $K$, the extrinsic normal curvature $K_{N}$ and a squared
mean curvature $\left \Vert H\right \Vert ^{2}$ of any surface $M$ in
Euclidean 4-space $\mathbb{E}^{4},$ namely 
\begin{equation*}
K+K_{N}\leq \left \Vert H\right \Vert ^{2}
\end{equation*}%
with the equality holding if and only if the curvature ellipse is a circle (%
\cite{DDVV}, \cite{GR}). Following Verstraelen et. al. \cite{DPV}, \cite{PV}%
, a surface $M$ in $\mathbb{E}^{4}$ is called Wintgen ideal if it satisfies
the equality case of Wintgen inequality identically. Obviously Wintgen ideal
surfaces in $\mathbb{E}^{4}$ are exactly superminimal \cite{Fr} or
superconformal \cite{DT} surfaces. In \cite{Ch2} B.Y. Chen completely
classified Wintgen ideal surfaces in $\mathbb{E}^{4}$ with equal Gauss and
normal curvatures.

Wintgen's inequality is extended to surfaces in real space form by I.V.
Guadalupe and L. Rodriguez \cite{GR}. However in \cite{DDVV} the authors
make progress in the case of submanifolds in $(n+2)$-dimensional real space
form. In the same paper they conjectured that the above pointwise inequality
is valid for higher dimensional cases (see also \cite{DFV}, \cite{Lu1} and 
\cite{Lu2} for some works on this pointwise inequality).

Recently, G. Jianguan and T. Zizhou give a proof of DDVV conjecture on a
submanifold of a real space form. Furthermore they solved the problem of its
equality case.

Let $M$ a submanifold of a $(n+d)$-dimesional Euclidean space $\mathbb{E}%
^{n+d}.$ Denote by $\overline{R}$ the curvature tensor of the Vander
Waerden-Bortoletti connection $\overline{\nabla }$ of $M$ and $h$ is the
second fundamental form of $M$ in $\mathbb{E}^{n+d}.$ The submanifold $M$ is
called semi-parallel (or semi-symmetric \cite{Lu}) if $\overline{R}\cdot h=0$
\cite{De}. This notion is an extrinsic analogue for semi-symmetric spaces,
i.e. Riemannian manifolds for which $R\cdot R=0$ and a direct generalization
of parallel submanifolds, i.e. submanifolds for which $\overline{\nabla }%
h=0. $ In \cite{De} J. Deprez showed the fact that the submanifold $M\subset 
\mathbb{E}^{n+d}$ is semi-parallel implies that $(M,g)$ is semi-symmetric.
For references on semi-symmetric spaces, see \cite{Sz}; for references on
parallel immersions, see \cite{Fe}. In \cite{De} J. Deprez gave a local
classification of semi-parallel hypersurfaces in Euclidean space. It is
easily seen that all surfaces are semi-parallel.

In the present study we consider the Wintgen ideal surfaces in $n$%
-dimensional Euclidean space $\mathbb{E}^{n}.$ We have shown that Wintgen
ideal surfaces in $\mathbb{E}^{n}$ satisfying the semiparallelity condition $%
\overline{R}(X,Y)\cdot h=0$ are totally umbilical. Further, we obtain some
results in $\mathbb{E}^{4}.$

\section{Basic Concepts}

Let $M$ be a smooth surface in n-dimensional Euclidean space $\mathbb{E}^{n}$
given with the surface patch $X(u,v)$ : $(u,v)\in D\subset \mathbb{E}^{2}$.
The tangent space to $M$ at an arbitrary point $p=X(u,v)$ of $M$ span $%
\left
\{ X_{u},X_{v}\right \} $. In the chart $(u,v)$ the coefficients of
the first fundamental form of $M$ are given by 
\begin{equation}
E=\left \langle X_{u},X_{u}\right \rangle ,F=\left \langle X_{u},X_{v}\right
\rangle ,G=\left \langle X_{v},X_{v}\right \rangle ,  \label{B1}
\end{equation}%
where $\left \langle ,\right \rangle $ is the Euclidean inner product. We
assume that $W^{2}=EG-F^{2}\neq 0,$ i.e. the surface patch $X(u,v)$ is
regular.\ For each $p\in M$, consider the decomposition $T_{p}\mathbb{E}%
^{n}=T_{p}M\oplus T_{p}^{\perp }M$ where $T_{p}^{\perp }M$ is the orthogonal
component of the tangent plane $T_{p}M$ in $\mathbb{E}^{n}$, that is the
normal space of $M$ at $p$.

Let $\chi (M)$ and $\chi ^{\perp }(M)$ be the space of the smooth vector
fields tangent and normal to $M$ respectively. Denote by $\nabla $ and $%
\overline{\nabla }$ the Levi-Civita connections on $M$ and $\mathbb{E}^{n},$
respectively. Given any vector fields $X_{i}$ and $X_{j}$ tangent to $M$
consider the second fundamental map $h:\chi (M)\times \chi (M)\rightarrow
\chi ^{\perp }(M);$%
\begin{equation}
h(X_{i},X_{j})=\widetilde{\nabla }_{X_{i}}X_{j}-\nabla _{X_{i}}X_{j};\text{ }%
1\leq i,j\leq 2.  \label{B2}
\end{equation}%
where $\widetilde{\nabla }$ is the induced. This map is well-defined,
symmetric and bilinear.

For any normal vector field $N_{\alpha }$ $1\leq \alpha \leq n-2$ of $M$,
recall the shape operator $A:\chi ^{\perp }(M)\times \chi (M)\rightarrow
\chi (M);$%
\begin{equation}
A_{N_{\alpha }}X_{i}=-\widetilde{\nabla }_{N_{\alpha
}}X_{i}+D_{X_{i}}N_{\alpha };\text{ \  \ }1\leq i\leq 2.  \label{B3}
\end{equation}%
where $D$ denotes the normal connection of $M$ in $\mathbb{E}^{n}$ \cite{Ch1}%
$.$This operator is bilinear, self-adjoint and satisfies the following
equation:%
\begin{equation}
\left \langle A_{N_{\alpha }}X_{i},X_{j}\right \rangle =\left \langle
h(X_{i},X_{j}),N_{\alpha }\right \rangle \text{, }1\leq i,j\leq 2.
\label{B4}
\end{equation}

The equation (\ref{B2}) is called Gaussian formula, and%
\begin{equation}
h(X_{i},X_{j})=\overset{n-2}{\underset{\alpha =1}{\sum }}h_{ij}^{\alpha
}N_{\alpha },\  \  \  \  \ 1\leq i,j\leq 2  \label{B5}
\end{equation}%
where $h_{ij}^{\alpha }$ are the coefficients of the second fundamental form 
$h$ \cite{Ch1}. If $h=0$ then $M$ is called totally geodesic. $M$ is totally
umbilical if all shape operators are proportional to the identity map. $M$
is an isotropic surface if for each $p$ in $M$, $\left \Vert
h(X,X)\right
\Vert $ is independent of the choice of a unit vector $X$ in $%
T_{p}M$.

If we define a covariant differentiation $\overline{\nabla }h$ of the second
fundamental form $h$ on the direct sum of the tangent bundle and normal
bundle $TM\oplus T^{\bot }M$ of $M$ by 
\begin{equation}
(\overline{\nabla }_{X_{i}}h)(X_{j},X_{k})=D_{X_{i}}h(X_{j},X_{k})-h(\nabla
_{X_{i}}X_{j},X_{k})-h(X_{j},\nabla _{X_{i}}X_{k})  \label{B6}
\end{equation}%
for any vector fields $X_{i}$,$X_{j},X_{k}$ tangent to $M$. Then we have the
Codazzi equation 
\begin{equation}
(\overline{\nabla }_{X_{i}}h)(X_{j},X_{k})=(\overline{\nabla }%
_{X_{j}}h)(X_{i},X_{k})  \label{B7}
\end{equation}%
where $\overline{\nabla }$ is called the Vander Waerden-Bortoletti
connection of $M$ \cite{Ch1}.

We denote $R$ and $\overline{R}$ the curvature tensors associated with $%
\nabla $ and $D$ respectively;%
\begin{eqnarray}
R(X_{i},X_{j})X_{k} &=&\nabla _{X_{i}}\nabla _{X_{j}}X_{k}-\nabla
_{X_{j}}\nabla _{X_{i}}X_{k}-\nabla _{\lbrack X_{i},X_{j}]}X_{k},  \label{B8}
\\
R^{\bot }(X_{i},X_{j})N_{\alpha } &=&h(X_{i},A_{N_{\alpha
}}X_{j})-h(X_{j},A_{N_{\alpha }}X_{i}).  \label{B9}
\end{eqnarray}

The equation of Gauss and Ricci are given respectively by%
\begin{eqnarray}
\text{ \  \ }\left \langle R(X_{i},X_{j})X_{k},X_{l}\right \rangle &=&\left
\langle h(X_{i},X_{l}),h(X_{j},X_{k})\right \rangle -\left \langle
h(X_{i},X_{k}),h(X_{j},X_{l})\right \rangle ,  \label{B10} \\
\text{ \  \  \  \  \ }\left \langle R^{\bot }(X_{i},X_{j})N_{\alpha },N_{\beta
}\right \rangle &=&\left \langle [A_{N_{\alpha }},A_{N_{\beta
}}]X_{i,}X_{j}\right \rangle  \label{B11}
\end{eqnarray}%
for the vector fields $X_{i},X_{j},X_{k}$ tangent to $M$ and $N_{\alpha
},N_{\beta }$ normal to $M$ \cite{Ch1}.

Let us $X_{i}\wedge X_{j}$ denote the endomorphism $X_{k}\longrightarrow
\left \langle X_{j},X_{k}\right \rangle X_{i}-$ $\left \langle
X_{i},X_{k}\right \rangle X_{j}.$ Then the curvature tensor $R$ of $M$ is
given by the equation 
\begin{equation}
R(X_{i},X_{j})X_{k}=\overset{n-2}{\underset{\alpha =1}{\sum }}\left(
A_{N_{\alpha }}X_{i}\wedge A_{N_{\alpha }}X_{j}\right) X_{k}.  \label{B12}
\end{equation}

It is easy to show that 
\begin{equation}
R(X_{i},X_{j})X_{k}=K\left( X_{i}\wedge X_{j}\right) X_{k}.  \label{B13}
\end{equation}%
where $K$ is the Gaussian curvature of $M$ defined by 
\begin{equation}
K=\left \langle h(X_{1},X_{1}),h(X_{2},X_{2})\right \rangle -\left \Vert
h(X_{1},X_{2})\right \Vert ^{2}  \label{B14}
\end{equation}%
(see \cite{GR}).

The normal curvature $K_{N}$ of $M$ is defined by (see \cite{DDVV}) 
\begin{equation}
K_{N}=\left \{ \overset{n-2}{\underset{1=\alpha <\beta }{\sum }}\left
\langle R^{\bot }(X_{1},X_{2})N_{\alpha },N_{\beta }\right \rangle
^{2}\right \} ^{1/2}.  \label{B15}
\end{equation}

We observe that the normal connection $D$ of $M$ is flat if and only if $%
K_{N}=0,$ and by a result of Cartan, this equivalent to the
diagonalisability of all shape operators $A_{N_{\alpha }}$\ of $M$, which
means that $M$ is a totally umbilical surface in $\mathbb{E}^{n}$.

Further, the mean curvature vector $\overrightarrow{H}$ of $M$ is defined by%
\begin{equation}
\overrightarrow{H}=\frac{1}{2}\sum_{\alpha =1}^{n-2}tr(A_{N_{\alpha
}})N_{\alpha }.  \label{B16}
\end{equation}

\section{Wintgen ideal Surfaces}

For the surface \ in \ Euclidean \ 3-space $\  \mathbb{E}^{3}$, \  \ the \
Euler \ inequality $K\leq \left \Vert \overrightarrow{H}\right \Vert ^{2}$
holds. Obviously, $K=\left \Vert \overrightarrow{H}\right \Vert ^{2}$
everywhere on $M$ if and \ only \ if the surface $M$ is totally umbilical in 
$\mathbb{E}^{3}.$ So by \ theorem \ of Meusnier, $M$ is totally umbilical if
\ and \ only if $M$ is part of a plane or a round sphere $S^{2}$ in $\mathbb{%
E}^{3}$ \cite{Ch2}$.$ In 1979, Wintgen proved a basic relation \ between \  \
the \  \ Gauss curvature $K$, the normal curvature $K_{N}$ and a squared mean
curvature $\left \Vert \overrightarrow{H}\right \Vert ^{2}$ of any surface $%
M $ in Euclidean 4-space $\mathbb{E}^{4},$ namely 
\begin{equation}
K+K_{N}\leq \left \Vert \overrightarrow{H}\right \Vert ^{2}  \label{C1}
\end{equation}%
with the equality holding if and only if the curvature ellipse of $M$ is a
circle \cite{GR}.

A surface $M$ in $\mathbb{E}^{4}$ is called \textit{Wintgen ideal} if it
satisfies the equality case of Wintgen inequality identically (\ref{C1})
(see, \cite{DPV}, \cite{PV}).

In \cite{Ch2} B.Y. Chen gave the following result.

\begin{theorem}
\cite{Ch2} Let $M$ be a smooth surface in Euclidean 4-space $\  \mathbb{E}%
^{4}.$ Then \ the \ Wintgen inequality (\ref{C1}) holds at every point in M.
Moreover,

$i)$ If $K_{N}\geqslant 0$ holds at a point $p\in M$, then the equality (\ref%
{C1}) holds at $p$ if and only if, with respect to some \ suitable \
orthonormal \ frame $\left \{ X_{1},X_{2},N_{1},N_{2}\right \} $ at $p$, the
shape operator at $p$ satisfies%
\begin{equation}
A_{N_{1}}=\left( 
\begin{array}{cc}
\lambda _{1}+\mu & 0 \\ 
0 & \lambda _{1}-\mu%
\end{array}%
\right) ,\text{ }A_{N_{2}}=\left( 
\begin{array}{cc}
\lambda _{2} & \mu \\ 
\mu & \lambda _{2}%
\end{array}%
\right) ,  \label{C2}
\end{equation}%
$ii)$ If $K_{N}<0$ holds at a point $p\in M$, then the \ equality \ (\ref{C1}%
) holds at $p$ if and only if, the shape operator at $p$ satisfies%
\begin{equation}
A_{N_{1}}=\left( 
\begin{array}{cc}
\lambda _{1}-2\mu & 0 \\ 
0 & \lambda _{1}%
\end{array}%
\right) ,\text{ }A_{N_{2}}=\left( 
\begin{array}{cc}
\lambda _{2} & \mu \\ 
\mu & \lambda _{2}%
\end{array}%
\right) .  \label{C3}
\end{equation}
\end{theorem}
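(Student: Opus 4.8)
The plan is to work entirely at the point $p$, where the statement is linear algebra. Fix an orthonormal tangent frame $\{X_{1},X_{2}\}$ and an orthonormal normal frame $\{N_{1},N_{2}\}$ and record the two shape operators as symmetric matrices
\[
A_{N_{1}}=\begin{pmatrix} a & b\\ b & c\end{pmatrix},\qquad
A_{N_{2}}=\begin{pmatrix} d & e\\ e & f\end{pmatrix}.
\]
By (\ref{B4}) these entries are exactly the components $h_{ij}^{\alpha}$, so (\ref{B14}) gives $K=\det A_{N_{1}}+\det A_{N_{2}}$, the definition (\ref{B16}) gives $\|\overrightarrow{H}\|^{2}=\tfrac14\big((a+c)^{2}+(d+f)^{2}\big)$, and (\ref{B11})--(\ref{B15}) give $K_{N}=\langle\lbrack A_{N_{1}},A_{N_{2}}\rbrack X_{1},X_{2}\rangle=b(d-f)+e(c-a)$ (signed once the orientations are chosen). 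I would then split each operator into its trace and trace-free parts by writing $a=\lambda_{1}+p$, $c=\lambda_{1}-p$, $d=\lambda_{2}+q$, $f=\lambda_{2}-q$. The virtue of this substitution is that it collapses the three quantities to $\|\overrightarrow{H}\|^{2}=\lambda_{1}^{2}+\lambda_{2}^{2}$, $\ \|\overrightarrow{H}\|^{2}-K=p^{2}+b^{2}+q^{2}+e^{2}$, and $K_{N}=2(bq-ep)$.

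Next I would reduce the inequality (\ref{C1}) to the elementary assertion $2|bq-ep|\le p^{2}+b^{2}+q^{2}+e^{2}$. Regarding $\alpha=(p,q)$ and $\beta=(b,e)$ as vectors in $\mathbb{E}^{2}$, the left side is $2|\alpha\times\beta|$ (the scalar cross product) and the right side is $|\alpha|^{2}+|\beta|^{2}$, so the bound follows in two steps: $|\alpha\times\beta|\le|\alpha|\,|\beta|$ (from $|\sin\theta|\le1$) and $2|\alpha|\,|\beta|\le|\alpha|^{2}+|\beta|^{2}$ (AM--GM). Since $K_{N}\le|K_{N}|$, this yields $K+K_{N}\le K+|K_{N}|\le\|\overrightarrow{H}\|^{2}$, which is (\ref{C1}).

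For the equality case I would observe that $K+|K_{N}|=\|\overrightarrow{H}\|^{2}$ forces both estimates above to be equalities, i.e. $\alpha\perp\beta$ and $|\alpha|=|\beta|$; geometrically this is precisely the condition that the curvature ellipse $\phi\mapsto(p\cos\phi+b\sin\phi,\;q\cos\phi+e\sin\phi)$ degenerates to a circle. Collecting the trace-free data into $T=\begin{pmatrix}p&b\\ q&e\end{pmatrix}$, the equality conditions say exactly that $T$ is a scalar multiple of an orthogonal matrix. Now a tangent rotation conjugates the two shape operators while a normal rotation mixes them, and on $T$ these act as right- and left-multiplication by rotation matrices; hence by the singular value normalisation for the $SO(2)\times SO(2)$ action I can arrange $b=q=0$, that is $T=\operatorname{diag}(s_{1},s_{2})$. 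The surviving equality condition $|\alpha|=|\beta|$ then reads $s_{1}^{2}=s_{2}^{2}$, so $s_{2}=\pm s_{1}=\pm\mu$, while $\det T=s_{1}s_{2}$ (which up to the orientation-fixing sign equals $\tfrac12 K_{N}$) is unchanged by these rotations. Thus the sign of $K_{N}$ decides between $s_{2}=-s_{1}$ and $s_{2}=+s_{1}$, and substituting back, after a harmless recentering of $\lambda_{1}$, reproduces the two normal forms (\ref{C2}) and (\ref{C3}). The converse is immediate: inserting (\ref{C2}) or (\ref{C3}) into the formulas above returns equality in (\ref{C1}).

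The curvature computations and the two scalar inequalities are routine; the real work is the last paragraph. The main obstacle is bookkeeping: tracking how the two independent circles of frame rotations act on $T$ and, above all, pinning down the orientation-dependent sign of $K_{N}$ so that the diagonalisation lands on (\ref{C2}) exactly when $K_{N}\ge0$ and on (\ref{C3}) exactly when $K_{N}<0$, rather than on a sign-flipped or relabelled variant of them.
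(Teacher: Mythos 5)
The paper does not prove this theorem at all: it is quoted verbatim from Chen \cite{Ch2} (and, for the $n$-dimensional version, from \cite{JZ}), so there is no in-paper argument to compare against. Judged on its own, your proposal is essentially a correct and complete proof of the standard kind: the trace/trace-free splitting giving $\|\overrightarrow{H}\|^{2}-K=p^{2}+b^{2}+q^{2}+e^{2}$ and $K_{N}=\pm2(bq-ep)$ is right, the two-step estimate $2|\alpha\times\beta|\le2|\alpha|\,|\beta|\le|\alpha|^{2}+|\beta|^{2}$ does prove the inequality, and the identification of the equality locus with $\{T^{t}T=|\alpha|^{2}I\}$ together with the $SO(2)\times SO(2)$ (left/right, with the tangent circle acting by angle $2\theta$) singular-value normalisation is exactly how one lands on the normal forms (\ref{C2})--(\ref{C3}). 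One remark: what you actually prove is $K+|K_{N}|\le\|\overrightarrow{H}\|^{2}$, which is the form Chen uses and the form quoted in this paper's abstract; if (\ref{C1}) is read literally as $K+K_{N}\le\|\overrightarrow{H}\|^{2}$ with a signed $K_{N}$, then equality would force $K_{N}\ge0$ and case $ii)$ would be vacuous, and note also that the paper's own definition (\ref{B15}) makes $K_{N}$ nonnegative by fiat. So the absolute-value reading is the correct one and your proof matches it.

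The one concrete loose end is the sign you yourself flag, and it does need fixing rather than just acknowledging: with your formula $K_{N}=\langle[A_{N_{1}},A_{N_{2}}]X_{1},X_{2}\rangle=b(d-f)+e(c-a)$, the normal form (\ref{C2}) (i.e. $p=e=\mu$, $b=q=0$) gives $K_{N}=e(c-a)=\mu\cdot(-2\mu)=-2\mu^{2}\le0$, so your diagonalisation as written assigns (\ref{C2}) to $K_{N}\le0$, the opposite of the theorem. The cure is that Chen's convention is $K_{N}=\langle R^{\perp}(X_{1},X_{2})N_{2},N_{1}\rangle$ (equivalently $\langle[A_{N_{2}},A_{N_{1}}]X_{1},X_{2}\rangle$), which flips the sign and makes $\det T=s_{1}s_{2}$ equal $+\tfrac12K_{N}$, after which $K_{N}\ge0$ gives $s_{2}=+s_{1}$ and hence (\ref{C2}), and $K_{N}<0$ gives $s_{2}=-s_{1}$, which after the recentering $\lambda_{1}\mapsto\lambda_{1}-\mu$ is (\ref{C3}). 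With that single convention pinned down, your argument is complete.
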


Wintgen's inequality is extended to surfaces in real space form \ by I.V.
Guadalupe and L. Rodriguez [13]. However in \cite{DDVV} \ the \ authors make
progress in the case of submanifolds \ in $(n+2)$-dimensional real space
form.

In the same paper they conjectured that \ (\textit{DDVV conjecture}) \ the
pointwise inequality (\ref{C1}) is valid for higher dimensional cases (see
also \cite{DFV}, \cite{Lu1} and \cite{Lu2}).

Recently, G. \ Jianguan \ and \ T. Zizhou \ give \ a \ proof \ of \ DDVV
conjecture on a submanifold of a real space \  \  \ form. \ Furthermore they
solved the problem of its equality case;

\begin{theorem}
\cite{JZ} Let $M\subset \mathbb{E}^{n}$ a smooth surface \ given \ with \
the patch $\ X(u,v)$. Then the \ Wintgen inequality (\ref{C1}) holds at
every point in M. Moreover, the equalities holds at some point $p\in M$ if
and only if there exits an \ orthonormal \ basis $\left \{ X,Y\right \} $ of 
$T_{p}M$ and orthonormal basis $\left \{ N_{1},N_{2},...,N_{n-2}\right \} $
of \ $T_{p}^{\bot }M$, such that{} ($r>3$);%
\begin{equation}
A_{N_{1}}=\left( 
\begin{array}{cc}
h_{11}^{1} & 0 \\ 
0 & h_{22}^{1}%
\end{array}%
\right) ,A_{N_{2}}=\left( 
\begin{array}{cc}
\lambda _{2} & \mu \\ 
\mu & \lambda _{2}%
\end{array}%
\right) ,A_{N_{r}}=\left( 
\begin{array}{cc}
\lambda _{r} & 0 \\ 
0 & \lambda _{r}%
\end{array}%
\right) ,\text{ }r\geq 3  \label{C4}
\end{equation}%
where $h_{11}^{1}=\lambda _{1}+\mu ,$ $h_{22}^{1}=\lambda _{1}-\mu $ or $%
h_{11}^{1}=\lambda _{1}-2\mu ,$ $h_{22}^{1}=\lambda _{1}.$ For the first
case (resp. second case) $M$ is called a Wintgen ideal surface of first kind
(resp. second kind).
\end{theorem}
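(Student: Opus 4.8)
The plan is to reduce the three curvature quantities to elementary functions of the shape-operator entries and then to recognize \eqref{C1} as Lagrange's identity combined with the arithmetic--geometric mean inequality. Fix $p\in M$, an orthonormal basis $\left\{ X_{1},X_{2}\right\}$ of $T_{p}M$ and an orthonormal basis $\left\{ N_{1},\dots ,N_{n-2}\right\}$ of $T_{p}^{\bot }M$, and write $a_{\alpha }=h_{11}^{\alpha }$, $b_{\alpha }=h_{22}^{\alpha }$, $c_{\alpha }=h_{12}^{\alpha }$ for the entries of $A_{N_{\alpha }}$. From \eqref{B14} one gets $K=\sum_{\alpha }(a_{\alpha }b_{\alpha }-c_{\alpha }^{2})$, from \eqref{B16} one gets $\left\Vert \overrightarrow{H}\right\Vert ^{2}=\tfrac{1}{4}\sum_{\alpha }(a_{\alpha }+b_{\alpha })^{2}$, and evaluating the commutators $[A_{N_{\alpha }},A_{N_{\beta }}]$ in the Ricci equation \eqref{B11} and feeding the result into \eqref{B15} gives $K_{N}=2\big(\sum_{\alpha <\beta }(p_{\alpha }q_{\beta }-p_{\beta }q_{\alpha })^{2}\big)^{1/2}$, where $p_{\alpha }=\tfrac{1}{2}(a_{\alpha }-b_{\alpha })$ and $q_{\alpha }=c_{\alpha }$ are the coordinates of the trace-free part of $A_{N_{\alpha }}$.

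Collecting these, set $P=(p_{1},\dots ,p_{n-2})$ and $Q=(q_{1},\dots ,q_{n-2})$ in $\mathbb{R}^{n-2}$. The identity $\tfrac{1}{4}(a+b)^{2}-ab=\tfrac{1}{4}(a-b)^{2}$ gives
\[
\left\Vert \overrightarrow{H}\right\Vert ^{2}-K=\sum_{\alpha }(p_{\alpha }^{2}+q_{\alpha }^{2})=\Vert P\Vert ^{2}+\Vert Q\Vert ^{2},
\]
while Lagrange's identity rewrites the normal curvature as $K_{N}=2\sqrt{\Vert P\Vert ^{2}\Vert Q\Vert ^{2}-\langle P,Q\rangle ^{2}}=2\,\Vert P\Vert \,\Vert Q\Vert \sin \theta$, with $\theta$ the angle between $P$ and $Q$. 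Inequality \eqref{C1} is then immediate, since $K_{N}=2\Vert P\Vert \Vert Q\Vert \sin \theta \leq 2\Vert P\Vert \Vert Q\Vert \leq \Vert P\Vert ^{2}+\Vert Q\Vert ^{2}=\left\Vert \overrightarrow{H}\right\Vert ^{2}-K$, using $\sin \theta \leq 1$ and AM--GM.

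For the equality case both estimates must be sharp, so $\langle P,Q\rangle =0$, $\Vert P\Vert =\Vert Q\Vert =:\mu$, and (when $\mu \neq 0$) $\sin \theta =1$; equivalently the $(n-2)\times 2$ matrix $V$ whose $\alpha$-th row is $(p_{\alpha },q_{\alpha })$ satisfies $V^{\top }V=\mu ^{2}I_{2}$. To reach the normal form \eqref{C4} I would exploit the two residual symmetries. A rotation of the normal frame acts on $V$ by left multiplication $V\mapsto OV$ with $O\in SO(n-2)$, while a rotation of the tangent frame through an angle $\phi$ acts on the trace-free parts as a rotation through the doubled angle $2\phi$, i.e. by right multiplication on $V$. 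Because $V$ has rank at most two, a normal rotation moves its column space into $\mathrm{span}(e_{1},e_{2})$, forcing $p_{\alpha }=q_{\alpha }=0$, hence $A_{N_{\alpha }}=\lambda _{\alpha }I$, for every $\alpha \geq 3$; these are the umbilical blocks $A_{N_{r}}=\lambda _{r}I$. The surviving $2\times 2$ block equals $\mu$ times an orthogonal matrix, and a combined tangent/normal-plane rotation of singular-value-decomposition type diagonalizes it, producing a diagonal $A_{N_{1}}$ and an $A_{N_{2}}$ with equal diagonal entries $\lambda _{2}$ and off-diagonal entry $\mu$.

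The main obstacle is precisely this last normalization together with the orientation bookkeeping. The doubling $\phi \mapsto 2\phi$ must be tracked carefully, and the two admissible signs $p_{1}=\pm \mu$ of the trace-free part of $A_{N_{1}}$ are inequivalent under orientation-preserving frame changes, since they reverse the sign of the oriented normal curvature $\langle R^{\bot }(X_{1},X_{2})N_{1},N_{2}\rangle$. Identifying $p_{1}=\mu$ with $h_{11}^{1}=\lambda _{1}+\mu$, $h_{22}^{1}=\lambda _{1}-\mu$ and $p_{1}=-\mu$ with $h_{11}^{1}=\lambda _{1}-2\mu$, $h_{22}^{1}=\lambda _{1}$ then yields the Wintgen ideal surfaces of first and second kind, in agreement with Chen's theorem in $\mathbb{E}^{4}$.
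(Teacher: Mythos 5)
The paper does not actually prove this statement: it is quoted from the preprint \cite{JZ} with no argument supplied, so there is no in-paper proof to compare yours against. Your argument is the standard elementary proof of the surface case of the DDVV inequality (essentially that of Guadalupe--Rodriguez and of DeSmet--Dillen--Verstraelen--Vrancken), and it is sound: the identities $\left\Vert \overrightarrow{H}\right\Vert ^{2}-K=\Vert P\Vert ^{2}+\Vert Q\Vert ^{2}$ and $K_{N}=2\sqrt{\Vert P\Vert ^{2}\Vert Q\Vert ^{2}-\langle P,Q\rangle ^{2}}$ do follow from \eqref{B14}, \eqref{B16}, \eqref{B11} and \eqref{B15} (the commutator entry is $\left\langle [A_{N_{\alpha }},A_{N_{\beta }}]X_{1},X_{2}\right\rangle =2(p_{\alpha }q_{\beta }-p_{\beta }q_{\alpha })$, whence the factor $2$), the inequality is then Cauchy--Schwarz plus AM--GM, and equality forces $\langle P,Q\rangle =0$ and $\Vert P\Vert =\Vert Q\Vert =\mu $, after which the matrix $V$ with $V^{\top }V=\mu ^{2}I_{2}$ is reduced to the normal form \eqref{C4} by the left $O(n-2)$ action on the normal frame and the right angle-doubled $SO(2)$ action of the tangent frame, exactly as you describe. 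Two small points deserve explicit mention in a written-up version. First, the degenerate case $P=Q=0$ (a totally umbilical point, where equality holds trivially) should be recorded as the instance $\mu =0$ of \eqref{C4}. Second, your orientation remark is correct and is the crux of the ``first kind versus second kind'' dichotomy: since $V_{0}=\mu U$ with $U\in O(2)$, the two normal forms correspond to $\det U=\pm 1$, i.e.\ to the sign of $p_{1}q_{2}-p_{2}q_{1}$, equivalently of $\left\langle R^{\bot }(X_{1},X_{2})N_{1},N_{2}\right\rangle $; they are interchanged by an orientation-reversing frame change, so the dichotomy is well defined only once orientations of $T_{p}M$ and of the plane spanned by $N_{1},N_{2}$ are fixed --- a convention the statement in the paper suppresses, which is why its ``or'' looks unmotivated.
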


\section{Semi-parallel Surfaces}

Let $M$ a smooth surface in $n$-dimesional Euclidean space $\mathbb{E}^{n}.$
Let $\overline{\nabla }$ be the connection of Vander Waerden-Bortoletti of $%
M $. Denote the tensors $\overline{\nabla }$ by $\overline{R}$ . Then the
product tensor $\overline{R}\cdot h$ of the curvature tensor $\overline{R}$
with the second fundamental form $h$ is defined by%
\begin{eqnarray*}
(\overline{R}(X_{i},X_{j})\cdot h)(X_{k},X_{l}) &=&\overline{\nabla }%
_{X_{i}}(\overline{\nabla }_{X_{j}}h(X_{k},X_{l}))-\overline{\nabla }%
_{X_{j}}(\overline{\nabla }_{X_{i}}h(X_{k},X_{l})) \\
&&-\overline{\nabla }_{[X_{i},X_{j}]}h(X_{k},X_{l})
\end{eqnarray*}%
for all $X_{i},X_{j},X_{k},X_{l}$ tangent to $M.$

The surface $M$ is said to be semi-parallel if $\overline{R}\cdot h=0,$ i.e. 
$\overline{R}(X_{i},X_{j})\cdot h=0$ (\cite{Lu}, \cite{De}, \cite{Ds}, \cite%
{OAM}). It is easy to see that 
\begin{eqnarray}
(\overline{R}(X_{i},X_{j})\cdot h)(X_{k},X_{l}) &=&R^{\bot
}(X_{i},X_{j})h(X_{k},X_{l})  \label{D1} \\
&&-h(R(X_{i},X_{j})X_{k},X_{l})-h(X_{k},R(X_{i},X_{j})X_{l}),  \notag
\end{eqnarray}

This notion is an extrinsic analogue for semi-symmetric spaces, i.e.
Riemannian manifolds for which $R\cdot R=0$ and a generalization of parallel
surfaces, i.e. $\overline{\nabla }h=0$ \cite{Fe}$.$

First, we proved the following result.

\begin{lemma}
Let $M\subset \mathbb{E}^{n}$ a smooth surface given with the patch $X(u,v)$%
. Then the following equalities are hold;%
\begin{eqnarray}
(\overline{R}(X_{1},X_{2})\cdot h)(X_{1},X_{1}) &=&\left( \sum_{\alpha
=1}^{n-2}h_{11}^{\alpha }(h_{22}^{\alpha }-h_{11}^{\alpha })+2K\right)
h(X_{1},X_{2})  \notag \\
&&+\sum_{\alpha =1}^{n-2}h_{11}^{\alpha }h_{12}^{\alpha
}(h(X_{1},X_{1})-h(X_{2},X_{2}))  \notag \\
(\overline{R}(X_{1},X_{2})\cdot h)(X_{1},X_{2}) &=&\left( \sum_{\alpha
=1}^{n-2}h_{12}^{\alpha }(h_{22}^{\alpha }-h_{11}^{\alpha })\right)
h(X_{1},X_{2})  \label{D2} \\
&&+(\sum_{\alpha =1}^{n-2}h_{12}^{\alpha }h_{12}^{\alpha
}-K)(h(X_{1},X_{1})-h(X_{2},X_{2}))  \notag \\
(\overline{R}(X_{1},X_{2})\cdot h)(X_{2},X_{2}) &=&\left( \sum_{\alpha
=1}^{n-2}h_{22}^{\alpha }(h_{22}^{\alpha }-h_{11}^{\alpha })-2K\right)
h(X_{1},X_{2})  \notag \\
&&+\sum_{\alpha =1}^{n-2}h_{22}^{\alpha }h_{12}^{\alpha
}(h(X_{1},X_{1})-h(X_{2},X_{2})).  \notag
\end{eqnarray}
\end{lemma}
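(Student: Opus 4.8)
The plan is to specialize the pointwise expansion (\ref{D1}) to an orthonormal frame $\{X_1,X_2\}$ (the setting in which (\ref{B13}) and (\ref{B14}) are written) and to evaluate its two kinds of terms separately: the tangential curvature terms $-h(R(X_1,X_2)X_k,X_l)-h(X_k,R(X_1,X_2)X_l)$ and the normal curvature term $R^{\perp}(X_1,X_2)h(X_k,X_l)$. For the tangential part I would use the Gauss-curvature description (\ref{B13}) together with the definition of the wedge endomorphism: since $\{X_1,X_2\}$ is orthonormal, $(X_1\wedge X_2)X_1=-X_2$ and $(X_1\wedge X_2)X_2=X_1$, whence
\begin{equation*}
R(X_1,X_2)X_1=-KX_2,\qquad R(X_1,X_2)X_2=KX_1.
\end{equation*}
Feeding these into the tangential terms and using the bilinearity and symmetry of $h$ produces $2K\,h(X_1,X_2)$, then $-K\big(h(X_1,X_1)-h(X_2,X_2)\big)$, then $-2K\,h(X_1,X_2)$ in the three cases $(k,l)=(1,1),(1,2),(2,2)$ respectively.

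For the normal part I would expand $h(X_k,X_l)=\sum_{\alpha}h_{kl}^{\alpha}N_{\alpha}$ and apply (\ref{B9}) to each $N_{\alpha}$. Writing the shape operators in the orthonormal frame via (\ref{B4}), namely $A_{N_{\alpha}}X_1=h_{11}^{\alpha}X_1+h_{12}^{\alpha}X_2$ and $A_{N_{\alpha}}X_2=h_{12}^{\alpha}X_1+h_{22}^{\alpha}X_2$, a short computation yields the single master identity
\begin{equation*}
R^{\perp}(X_1,X_2)N_{\alpha}=h_{12}^{\alpha}\big(h(X_1,X_1)-h(X_2,X_2)\big)+(h_{22}^{\alpha}-h_{11}^{\alpha})\,h(X_1,X_2).
\end{equation*}
Multiplying by $h_{kl}^{\alpha}$ and summing over $\alpha$ then expresses $R^{\perp}(X_1,X_2)h(X_k,X_l)$ as a combination of $h(X_1,X_1)-h(X_2,X_2)$ and $h(X_1,X_2)$ with exactly the coefficients $\sum_{\alpha}h_{kl}^{\alpha}h_{12}^{\alpha}$ and $\sum_{\alpha}h_{kl}^{\alpha}(h_{22}^{\alpha}-h_{11}^{\alpha})$.

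Adding the tangential and normal contributions and collecting the coefficients of $h(X_1,X_2)$ and of $h(X_1,X_1)-h(X_2,X_2)$ gives the three stated equalities (\ref{D2}). There is no deep obstacle here: the whole statement is a direct frame computation. The only genuine care-point is the bookkeeping of signs, which enters twice, once in the wedge evaluation (where $X_1\wedge X_2$ sends $X_1$ to $-X_2$) and once in the antisymmetric combination defining $R^{\perp}(X_1,X_2)N_{\alpha}$ in (\ref{B9}); getting both right is what turns $2K$ into $-2K$ between the first and third identities and isolates the lone $-K$ in the middle one.
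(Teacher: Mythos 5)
Your proposal is correct and follows essentially the same route as the paper's proof: you derive $R^{\perp}(X_{1},X_{2})N_{\alpha}=h_{12}^{\alpha}\bigl(h(X_{1},X_{1})-h(X_{2},X_{2})\bigr)+(h_{22}^{\alpha}-h_{11}^{\alpha})h(X_{1},X_{2})$ from (\ref{B9}) and (\ref{B4}), obtain $R(X_{1},X_{2})X_{1}=-KX_{2}$, $R(X_{1},X_{2})X_{2}=KX_{1}$ from (\ref{B13}), and substitute both into (\ref{D1}), exactly as the paper does via its equations (\ref{D3}) and (\ref{D4}). The only difference is that you spell out the wedge-endomorphism evaluation and the shape-operator matrices explicitly, which the paper leaves implicit.
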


\begin{proof}
Substituting (\ref{B5}) and (\ref{B4}) into (\ref{B9}) we get 
\begin{eqnarray}
R^{\bot }(X_{1},X_{2})N_{\alpha } &=&h_{12}^{\alpha
}(h(X_{1},X_{1})-h(X_{2},X_{2}))  \label{D3} \\
&&+(h_{22}^{\alpha }-h_{11}^{\alpha })h(X_{1},X_{2}).  \notag
\end{eqnarray}%
Further, by the use of (\ref{B13}) we get 
\begin{eqnarray}
R(X_{1},X_{2})X_{1} &=&-KX_{2}  \label{D4} \\
R(X_{1},X_{2})X_{2} &=&KX_{1}.  \notag
\end{eqnarray}%
So, substituting (\ref{D3}) and (\ref{D4}) into (\ref{D1}) we get the result.
\end{proof}

Semi-parallel surfaces in $\mathbb{E}^{n}$ are classified by J. Deprez \cite%
{De}:

\begin{theorem}
\cite{De} Let $M$ a surface in $n$-dimensional Euclidean space $\mathbb{E}%
^{n}.$ Then $M$ is semi-parallel if and only if locally;

i) $M$ is equivalent to a 2-sphere, or

ii) $M$ has trivial normal connection, or

iii) $M$ is an isotropic surface in $\mathbb{E}^{5}\subset \mathbb{E}^{n}$
satisfying $\left \Vert H\right \Vert ^{2}=3K.$
\end{theorem}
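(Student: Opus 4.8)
The plan is to read the semi-parallelity condition $\overline{R}(X_1,X_2)\cdot h=0$ directly off the three identities of the preceding Lemma (see (\ref{D2})) and to organise everything around the two normal vectors $\vec{P}:=h(X_1,X_2)$ and $\vec{Q}:=h(X_1,X_1)-h(X_2,X_2)$, which (up to scale) span the curvature ellipse centred at the mean curvature vector $\vec{H}=\tfrac12(h(X_1,X_1)+h(X_2,X_2))$. Since semi-parallelity is independent of the orthonormal tangent frame, I would first rotate $\{X_1,X_2\}$; a rotation by $\phi$ rotates the pair $(\vec{P},\vec{Q})$ by $2\phi$ within the plane it spans, so $\phi$ can be chosen to achieve $\langle \vec{P},\vec{Q}\rangle=0$. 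In such a frame each right-hand side in (\ref{D2}) is a combination of the orthogonal vectors $\vec{P}$ and $\vec{Q}$, whence $\overline{R}\cdot h=0$ is equivalent to the vanishing of all six resulting coefficients. The trichotomy is then dictated by $\dim \mathrm{span}\{\vec{P},\vec{Q}\}$.

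If $\vec{P}=\vec{Q}=0$ the surface is totally umbilical; then $K=\|\vec{H}\|^2$ and the Codazzi equation forces $\vec{H}$ to be parallel, so $M$ is a piece of a round $S^2$ in an affine $\mathbb{E}^3$, i.e. case (i) (the subcase $\vec{H}=0$ being a plane, which falls under (ii)). If $\mathrm{span}\{\vec{P},\vec{Q}\}$ is a line, the curvature ellipse degenerates; by (\ref{D3}) the expression $R^{\perp}(X_1,X_2)N_\alpha=h_{12}^{\alpha}\vec{Q}+(h_{22}^{\alpha}-h_{11}^{\alpha})\vec{P}$ is then a fixed scalar multiple of a single vector, and the dependence $\vec{P}\parallel\vec{Q}$ forces it to vanish for every $\alpha$, so $K_N=0$. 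This is exactly the Cartan criterion quoted in Section 2, giving case (ii).

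The essential case is $\vec{P},\vec{Q}$ linearly independent. Orthogonality of $\vec{P},\vec{Q}$ separates the six equations, which deliver $\sum_\alpha (h_{12}^{\alpha})^2=K$, $\sum_\alpha h_{11}^{\alpha}h_{12}^{\alpha}=\sum_\alpha h_{22}^{\alpha}h_{12}^{\alpha}=0$, $\sum_\alpha (h_{11}^{\alpha})^2=\sum_\alpha (h_{22}^{\alpha})^2$ and $\sum_\alpha h_{11}^{\alpha}h_{22}^{\alpha}-\sum_\alpha (h_{11}^{\alpha})^2=-2K$. These yield $\|\vec{P}\|^2=K$, $\|\vec{Q}\|^2=4K$ and $\vec{H}\perp\vec{P},\vec{Q}$, which is precisely the statement that the curvature ellipse $\theta\mapsto \vec{H}+\tfrac12\vec{Q}\cos 2\theta+\vec{P}\sin 2\theta$ is a circle and that $\|h(X_\theta,X_\theta)\|^2=\|\vec{H}\|^2+K$ is constant, so $M$ is isotropic. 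Inserting $\sum_\alpha (h_{12}^{\alpha})^2=K$ into the Gauss equation (\ref{B14}) gives $\sum_\alpha h_{11}^{\alpha}h_{22}^{\alpha}=2K$, hence $\sum_\alpha (h_{11}^{\alpha})^2=4K$ and $\|\vec{H}\|^2=3K$. Finally $\vec{H},\vec{Q},\vec{P}$ are three mutually orthogonal nonzero vectors spanning a $3$-dimensional first normal space; after checking via Codazzi that this subbundle is parallel, a reduction-of-codimension theorem places $M$ in an $\mathbb{E}^5\subset\mathbb{E}^n$, which is case (iii). The converse follows by substituting each of the three model configurations back into (\ref{D2}).

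I expect the main obstacle to be the reduction of codimension in case (iii): although the algebra leading to $\|\vec{H}\|^2=3K$ is mechanical once the frame is normalised, proving that the $3$-dimensional first normal space is parallel in the normal bundle requires the Codazzi equation together with control of how the adapted frame varies from point to point. A secondary difficulty lies on the converse side for case (ii), since a flat normal connection alone does not force $\overline{R}\cdot h=0$; one must identify which flat-normal-connection surfaces also satisfy the remaining scalar equations extracted from (\ref{D2}).
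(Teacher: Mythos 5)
The paper does not actually prove this statement: Theorem 4.1 is imported verbatim from Deprez's classification \cite{De} and used as a black box, so there is no in-paper proof to compare yours against. Judged on its own terms, your reconstruction follows what is essentially the standard route: normalising the tangent frame so that $\vec{P}=h(X_1,X_2)$ and $\vec{Q}=h(X_1,X_1)-h(X_2,X_2)$ are orthogonal, splitting on $\dim\mathrm{span}\{\vec{P},\vec{Q}\}$, and reading the six scalar equations off (\ref{D2}). The algebra in the rank-two case is right: the coefficient equations do give $\Vert\vec{P}\Vert^{2}=K$, $\Vert\vec{Q}\Vert^{2}=4K$ and $\vec{H}\perp\vec{P},\vec{Q}$, and combined with the Gauss equation these yield isotropy and $\Vert H\Vert^{2}=3K$ exactly as you compute. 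The rank-one case correctly forces $R^{\perp}=0$, since $R^{\perp}(X_1,X_2)N_{\alpha}=\langle\vec{P},N_{\alpha}\rangle\vec{Q}-\langle\vec{Q},N_{\alpha}\rangle\vec{P}$ vanishes when $\vec{P}\parallel\vec{Q}$.

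Two gaps remain, both of which you name but neither of which you close. First, in case (iii) you must actually prove that the three-dimensional first normal bundle spanned by $\vec{H},\vec{P},\vec{Q}$ is parallel (via Codazzi in the adapted frame) before a reduction-of-codimension theorem can place $M$ in $\mathbb{E}^{5}$; as written this is an assertion, not an argument. Second, the converse for case (ii) is not merely a ``secondary difficulty''---it genuinely fails for the statement as printed. If the normal connection is flat, choose a frame diagonalising all shape operators, so $h_{12}^{\alpha}=0$ for every $\alpha$; then (\ref{D2}) reduces to $(\overline{R}(X_1,X_2)\cdot h)(X_1,X_2)=-K\,(h(X_1,X_1)-h(X_2,X_2))$, so semi-parallelity additionally forces $K=0$ or total umbilicity. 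Hence ``trivial normal connection'' alone does not imply $\overline{R}\cdot h=0$: Deprez's case (ii) is the class of \emph{flat} surfaces with flat normal connection, and the quotation in the paper has dropped the flatness hypothesis. Your proof therefore cannot be completed for the statement exactly as given; it can be completed for the corrected statement once the parallelism of the first normal bundle and the flatness in case (ii) are supplied.
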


We get the following result.

\begin{theorem}
Let $M$ a Wintgen ideal surface in $\mathbb{E}^{n}.$ If $M$ is semi-parallel
then it is a totally umbilical surface in $\mathbb{E}^{n}.$
\end{theorem}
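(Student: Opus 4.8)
The plan is to run the adapted shape-operator normal form for a Wintgen ideal surface (Theorem 3.2) through the three semiparallelity identities collected in Lemma 4.1. In the frame $\{X_1,X_2;N_1,\dots,N_{n-2}\}$ of Theorem 3.2 the only components of $h$ that survive are $h_{11}^1,h_{22}^1$, the pair $h_{11}^2=h_{22}^2=\lambda_2$ with $h_{12}^2=\mu$, and $h_{11}^r=h_{22}^r=\lambda_r$ for $r\ge 3$; in particular $h_{12}^1=0$ and $h_{12}^r=0$. First I would record the two vectors that drive the right-hand sides of Lemma 4.1, namely $h(X_1,X_2)=\mu N_2$ and $h(X_1,X_1)-h(X_2,X_2)=(h_{11}^1-h_{22}^1)N_1=\pm 2\mu N_1$, and then evaluate the coefficient sums $\sum_\alpha h_{ij}^\alpha(h_{22}^\alpha-h_{11}^\alpha)$ and $\sum_\alpha h_{ij}^\alpha h_{12}^\alpha$ that multiply them; because $h_{12}^\alpha$ and $h_{22}^\alpha-h_{11}^\alpha$ are supported on disjoint values of $\alpha$, these sums collapse to a handful of monomials in $\mu,\lambda_1,\lambda_2$.

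The second step is to impose $\overline R(X_1,X_2)\cdot h=0$ and, since $N_1\perp N_2$, to read off the $N_1$- and $N_2$-components of each of the three equations separately. The middle identity reduces to $(\mu^2-K)(\pm 2\mu N_1)=0$; the $N_1$-components of the first and third identities both give $\lambda_2\mu^2=0$, while their $N_2$-components give relations of the shape $K=\mu\lambda_1+\mu^2$ and $K=\mu^2-\mu\lambda_1$. Arguing by contradiction from $\mu\ne 0$, these force in turn $\lambda_2=0$, then $K=\mu^2$, and then $\lambda_1=0$. Feeding this back into the Gauss equation \eqref{B14} yields $\sum_{r\ge 3}\lambda_r^2=3\mu^2$, and hence $\Vert\overrightarrow{H}\Vert^2=\sum_{r\ge 3}\lambda_r^2=3K$.

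I expect this last configuration to be the genuine obstacle, because the computation does not by itself annihilate $\mu$: it only pins the surface down to an isotropic one satisfying $\Vert\overrightarrow{H}\Vert^2=3K$ and carrying its essential curvature in $\mathbb{E}^5$, which is exactly case (iii) of Deprez's trichotomy (Theorem 4.1). So the whole weight of the theorem sits on excluding, or reinterpreting, this isotropic case. When $n=4$ there are no directions $N_r$ with $r\ge 3$, so $\sum_{r\ge 3}\lambda_r^2=0$ collapses $K=\mu^2$ into $3\mu^2=0$ and forces $\mu=0$ immediately; the entire difficulty therefore lives in the higher-codimension umbilical directions $N_r$. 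The cleanest route I see is to pair the computation with Deprez's classification: case (i) (a round $2$-sphere) is totally umbilical outright, and in case (ii) (flat normal connection) the expression \eqref{D3} gives $R^\perp(X_1,X_2)N_1=\mp 2\mu^2 N_2$, so $K_N=0$ forces $\mu=0$; this leaves the isotropic case (iii) as the one fact that must still be ruled out for a Wintgen ideal surface.

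Finally, once the isotropic case is disposed of, the conclusion is pure bookkeeping: $\mu=0$ turns the normal form into $A_{N_1}=\lambda_1 I$, $A_{N_2}=\lambda_2 I$, and $A_{N_r}=\lambda_r I$, so every shape operator is a multiple of the identity and $M$ is totally umbilical. I would therefore invest essentially all of the effort in the compatibility of semiparallelity with isotropy in $\mathbb{E}^5$ — that is the crux on which the statement stands or falls — and treat the reduction via Lemma 4.1 and the two easy Deprez cases as the routine scaffolding around it.
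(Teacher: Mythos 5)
Your reduction is correct and, as it happens, more careful than the paper's own. The paper reaches its conclusion only because of a slip in passing from \eqref{D2} to \eqref{D7}: in the middle identity the coefficient of $h(X_1,X_2)=\mu N_2$ is $\sum_\alpha h_{12}^\alpha(h_{22}^\alpha-h_{11}^\alpha)$, which in the Wintgen frame equals $h_{12}^2(h_{22}^2-h_{11}^2)=\mu\cdot 0=0$; the paper instead writes this term as $(h_{22}^1-h_{11}^1)\mu^2 N_2$, mixing the index $\alpha=2$ in one factor with $\alpha=1$ in the other, and it is precisely the resulting unjustified first equation $\mu^2(h_{11}^1-h_{22}^1)=0$ of \eqref{D8} that forces $\mu=0$ there. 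With that term correctly deleted, the system is exactly the one you derive, and it admits the non-umbilic solution $\lambda_1=\lambda_2=0$, $K=\mu^2$, $\sum_{r\ge 3}\lambda_r^2=3\mu^2$, $\Vert\overrightarrow{H}\Vert^2=3K$.

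That said, your proposal has a genuine gap at the one place you yourself flag as the crux: you never dispose of Deprez's isotropic case (iii), you only announce that it ``must still be ruled out.'' It cannot be. The Veronese surface in $S^4(1)\subset\mathbb{E}^5$ realizes your residual configuration exactly: take $A_{N_1}=\mathrm{diag}(\mu,-\mu)$, $A_{N_2}$ with off-diagonal entries $\mu$ and zero diagonal, $A_{N_3}=I$ (the radial direction), $\mu^2=1/3$; then $K=1/3$, $K_N=2/3$, $\Vert\overrightarrow{H}\Vert^2=1$, so the surface is Wintgen ideal of the first kind, it is parallel (hence semiparallel), and it is manifestly not totally umbilical. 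So the step you defer is not scaffolding that got postponed; it is false, and with it the theorem as stated (the paper's proof goes through only via the erroneous extra equation noted above). What your computation actually proves is a dichotomy: a semiparallel Wintgen ideal surface in $\mathbb{E}^n$ is either totally umbilical or an isotropic Veronese-type surface in some $\mathbb{E}^5\subset\mathbb{E}^n$ satisfying $\Vert\overrightarrow{H}\Vert^2=3K$. If you want a correct statement, that is the one to write down; your final ``bookkeeping'' paragraph ($\mu=0$ implies all $A_{N_\alpha}$ are multiples of $I$) is fine and handles the first branch.
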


\begin{proof}
Let $M$ be a Wintgen ideal surface in $\mathbb{E}^{n}$ given with the patch $%
X(u,v)$. Then by Theorem 3.2 we get 
\begin{eqnarray}
h(X_{1},X_{2}) &=&\mu N_{2},  \label{D5} \\
h(X_{1},X_{1})-h(X_{2},X_{2}) &=&(h_{11}^{1}-h_{22}^{1})N_{1}  \notag
\end{eqnarray}%
where $h_{11}^{1}=\lambda _{1}+\mu ,$ $h_{22}^{1}=\lambda _{1}-\mu $ or $%
h_{11}^{1}=\lambda _{1}-2\mu ,$ $h_{22}^{1}=\lambda _{1}.$ Further,
substituting (\ref{D5}) into (\ref{D2}) and using Lemma 4.1 one can get%
\begin{eqnarray}
(\overline{R}(X_{1},X_{2})\cdot h)(X_{1},X_{1}) &=&\left( \sum_{\alpha
=1}^{n-2}\sum_{\alpha =1}^{d}h_{11}^{\alpha }(h_{22}^{\alpha
}-h_{11}^{\alpha })+2K\right) \mu N_{2}  \notag \\
&&+\left( \sum_{\alpha =1}^{n-2}h_{11}^{\alpha }h_{12}^{\alpha }\right)
(h_{11}^{1}-h_{22}^{1})N_{1}  \notag \\
(\overline{R}(X_{1},X_{2})\cdot h)(X_{1},X_{2}) &=&\left( \sum_{\alpha
=1}^{n-2}h_{12}^{\alpha }(h_{22}^{\alpha }-h_{11}^{\alpha })\right) \mu N_{2}
\label{D6} \\
&&+\left( \sum_{\alpha =1}^{n-2}h_{12}^{\alpha }h_{12}^{\alpha }-K\right)
(h_{11}^{1}-h_{22}^{1})N_{1}  \notag \\
(\overline{R}(X_{1},X_{2})\cdot h)(X_{2},X_{2}) &=&\left( \sum_{\alpha
=1}^{n-2}h_{22}^{\alpha }(h_{22}^{\alpha }-h_{11}^{\alpha })-2K\right) \mu
N_{2}  \notag \\
&&+\sum_{\alpha =1}^{n-2}h_{22}^{\alpha }h_{12}^{\alpha
}(h_{11}^{1}-h_{22}^{1})N_{1}  \notag
\end{eqnarray}%
Since $h_{12}^{2}=\mu ,h_{12}^{\alpha }=0,\alpha \neq 2,$ then the equation (%
\ref{D6}) becomes%
\begin{eqnarray}
(\overline{R}(X_{1},X_{2})\cdot h)(X_{1},X_{1}) &=&h_{11}^{2}\mu
(h_{11}^{1}-h_{22}^{1})N_{1}  \notag \\
&&+[h_{11}^{1}(h_{22}^{1}-h_{11}^{1})+2K]\mu N_{2},  \notag \\
(\overline{R}(X_{1},X_{2})\cdot h)(X_{1},X_{2}) &=&(\mu
^{2}-K)(h_{11}^{1}-h_{22}^{1})N_{1}  \label{D7} \\
&&+(h_{22}^{1}-h_{11}^{1})\mu ^{2}N_{2},  \notag \\
(\overline{R}(X_{1},X_{2})\cdot h)(X_{2},X_{2}) &=&h_{22}^{2}\mu
(h_{11}^{1}-h_{22}^{1})N_{1}  \notag \\
&&+[h_{22}^{1}(h_{22}^{1}-h_{11}^{1})-2K]\mu N_{2}.  \notag
\end{eqnarray}%
Suppose that, $M$ is semi-parallel then by definition $(\overline{R}%
(X_{1},X_{2})\cdot h)(X_{i},X_{j})=0,(1\leq i,j\leq 2).$ So, we get%
\begin{eqnarray}
\mu ^{2}(h_{11}^{1}-h_{22}^{1}) &=&0,  \notag \\
\lambda _{2}\mu (h_{11}^{1}-h_{22}^{1}) &=&0,  \notag \\
(\mu ^{2}-K)(h_{11}^{1}-h_{22}^{1}) &=&0,  \label{D8} \\
\mu \lbrack h_{11}^{1}(h_{22}^{1}-h_{11}^{1})+2K] &=&0,  \notag \\
\mu \lbrack h_{22}^{1}(h_{22}^{1}-h_{11}^{1})-2K] &=&0.  \notag
\end{eqnarray}%
where $h_{11}^{1}=\lambda _{1}+\mu ,$ $h_{22}^{1}=\lambda _{1}-\mu $ or $%
h_{11}^{1}=\lambda _{1}-2\mu ,$ $h_{22}^{1}=\lambda _{1}.$ Now, we study the
case $h_{11}^{1}=h_{22}^{1}.$ Then $\mu =0$ by (\ref{D8}). This means that $%
R^{\bot }=0$ by (\ref{D3}) and (\ref{D5}). This is equivalent to say that $M$
has vanishing normal curvature $K_{N}$, which means that $M$ is a totally
umbilical surface in $\mathbb{E}^{m}$.
\end{proof}

\section{Some Results in $\mathbb{E}^{4}$}

Rotation surfaces were studied in \cite{Vr} by Vranceanu as surfaces in $%
\mathbb{E}^{4}$ which are defined by the following parametrization;%
\begin{eqnarray}
X(u,v) &=&(r(v)\cos v\cos u,r(v)\cos v\sin u,  \label{E1} \\
&&r(v)\sin v\cos u,r(v)\sin v\sin u)  \notag
\end{eqnarray}%
where $r(v)$ is a real valued non-zero function.

We choose a moving frame $\left \{ X_{1},X_{2},N_{1},N_{2}\right \} $ such
that $X_{1},X_{2}$ are tangent to $M$ and $N_{1},N_{2}$ are normal to $M$ as
given the following (see \cite{Yo}):

\begin{eqnarray*}
X_{1} &=&\frac{\partial }{r(v)\partial u}=(\text{-}\cos v\sin u,\cos v\cos u,%
\text{-}\sin v\sin u,\sin v\cos u), \\
X_{2} &=&\frac{\partial }{A\partial v}=\frac{1}{A}(B(v)\cos u,B(v)\sin
u,C(v)\cos u,C(v)\sin u), \\
N_{1} &=&\frac{1}{A}(-C(v)\cos u,-C(v)\sin u,B(v)\cos u,B(v)\sin u), \\
N_{2} &=&(-\sin v\sin u,\sin v\cos u,\cos v\sin u,-\cos v\cos u)
\end{eqnarray*}%
where 
\begin{eqnarray*}
A(v) &=&\sqrt{r^{2}(v)+(r^{\prime })^{2}(v)}, \\
B(v) &=&r^{\prime }(v)\cos v-r(v)\sin v, \\
C(v) &=&r^{\prime }(v)\sin v+r(v)\cos v.
\end{eqnarray*}

Furthermore, by covariant differentiation with respect to $X_{1}$ and $X_{2}$
a straightforward calculation gives:%
\begin{eqnarray}
\widetilde{\nabla }_{X_{1}}X_{1} &=&-a(v)k(v)X_{2}+a(v)N_{1},  \notag \\
\widetilde{\nabla }_{X_{2}}X_{2} &=&b(v)N_{1},  \label{E2} \\
\widetilde{\nabla }_{X_{2}}X_{1} &=&-a(v)N_{2},  \notag
\end{eqnarray}%
where%
\begin{eqnarray}
k(v) &=&\frac{r^{\prime }(v)}{r(v)},  \notag \\
a(v) &=&\frac{1}{\sqrt{r^{2}(v)+(r^{\prime })^{2}(v)}},  \label{E3} \\
b(v) &=&\frac{2(r^{\prime }(v))^{2}-r(v)r^{\prime \prime }(v)+r^{2}(v)}{%
(r^{2}(v)+(r^{\prime })^{2}(v))^{3/2}}  \notag
\end{eqnarray}%
are differentiable functions.

Thus by the use of (\ref{B9}) together with (\ref{B14}) and (\ref{B15}) we
get the following result.

\begin{proposition}
Let $M$ a Vranceanu surface given with the surface patch (\ref{E1}). Then
the Gaussian curvature $K$ coincides with the normal curvature $K_{N}$ of $%
M. $ That is ;%
\begin{equation}
K=K_{N}=a(v)b(v)-a^{2}(v).  \label{E4}
\end{equation}
\end{proposition}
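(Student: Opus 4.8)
The plan is to read the second fundamental form directly off the frame data in (\ref{E2}) and then feed it into the Gauss equation (\ref{B14}) and the Ricci equation (\ref{B11}). The frame $\left\{X_{1},X_{2},N_{1},N_{2}\right\}$ is orthonormal (one checks $\left\Vert X_{u}\right\Vert =r(v)$ and $\left\Vert X_{v}\right\Vert =A(v)$, so that $X_{1},X_{2}$ are unit, while $N_{1},N_{2}$ are unit and mutually orthogonal by construction), so the coefficient formulas (\ref{B14}), (\ref{B15}) and (\ref{B11}) apply with no extra metric factors. By the Gaussian formula (\ref{B2}), $h(X_{i},X_{j})$ is precisely the normal component of $\widetilde{\nabla}_{X_{i}}X_{j}$; reading these off from (\ref{E2}) gives
\[
h(X_{1},X_{1})=a(v)N_{1},\qquad h(X_{2},X_{2})=b(v)N_{1},\qquad h(X_{1},X_{2})=-a(v)N_{2},
\]
so that $h_{11}^{1}=a$, $h_{22}^{1}=b$, $h_{12}^{2}=-a$, and every other coefficient $h_{ij}^{\alpha}$ vanishes.

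First I would compute $K$ from (\ref{B14}). Since $N_{1},N_{2}$ are orthonormal,
\[
K=\left\langle h(X_{1},X_{1}),h(X_{2},X_{2})\right\rangle -\left\Vert h(X_{1},X_{2})\right\Vert ^{2}=ab-a^{2},
\]
which is the asserted value. Next I would compute $K_{N}$. Using (\ref{B4}), the shape operators in the basis $\left\{X_{1},X_{2}\right\}$ are
\[
A_{N_{1}}=\left(\begin{array}{cc} a & 0 \\ 0 & b \end{array}\right),\qquad A_{N_{2}}=\left(\begin{array}{cc} 0 & -a \\ -a & 0 \end{array}\right).
\]
By the Ricci equation (\ref{B11}), $\left\langle R^{\bot}(X_{1},X_{2})N_{1},N_{2}\right\rangle =\left\langle [A_{N_{1}},A_{N_{2}}]X_{1},X_{2}\right\rangle$, and the commutator has off-diagonal entries $\pm(ab-a^{2})$, whence $\left\langle R^{\bot}(X_{1},X_{2})N_{1},N_{2}\right\rangle =\pm(ab-a^{2})$. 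Since $n=4$ the sum in (\ref{B15}) reduces to the single term $\alpha=1,\beta=2$, so $K_{N}=\abs{ab-a^{2}}$. Comparing with the value of $K$ obtained above gives $K=K_{N}=ab-a^{2}$.

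The computation itself is short, and the only genuine input beyond routine matrix algebra is the correct extraction of $h$ from (\ref{E2}) --- in particular recognizing that the tangential term $-a(v)k(v)X_{2}$ appearing in $\widetilde{\nabla}_{X_{1}}X_{1}$ belongs to the Levi-Civita connection $\nabla$ and so contributes nothing to $h$. The one point requiring care is the sign bookkeeping in (\ref{B15}): since $K_{N}$ is defined there as a square root of a square it is a priori nonnegative, so the identity $K=K_{N}$ must be read up to this absolute value, exactly as the equality $K+\abs{K_{N}}=\left\Vert H\right\Vert ^{2}$ is stated in the abstract. I do not anticipate any serious obstacle here; the proof is essentially a direct verification once $h$, the shape operators, and the frame's orthonormality are in place.
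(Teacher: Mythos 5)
Your proof is correct and follows essentially the same route as the paper, which offers no written proof beyond the remark that (\ref{B9}), (\ref{B14}) and (\ref{B15}) yield the result: your explicit extraction of $h(X_1,X_1)=aN_1$, $h(X_2,X_2)=bN_1$, $h(X_1,X_2)=-aN_2$ from (\ref{E2}), followed by the Gauss equation and the Ricci equation (using the equivalent (\ref{B11}) in place of (\ref{B9})), is exactly that computation carried out. Your caveat that (\ref{B15}) forces $K_N=\abs{ab-a^2}$, so the stated identity holds only up to this absolute value, is well taken and is glossed over in the paper.
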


\begin{theorem}
Let $M$ a Vranceanu surface given with the surface patch (\ref{E1}). Then, $%
M $ is Wintgen ideal surface of first kind if and only if 
\begin{equation}
r(v)=\pm \sqrt{c_{1}\cos (2v)-c_{2}\sin (2v)},\text{ }c_{1},c_{2}\in R
\label{E5}
\end{equation}%
holds.
\end{theorem}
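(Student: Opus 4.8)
The plan is to convert the statement into an explicit computation using the frame $\{X_1,X_2,N_1,N_2\}$ already recorded before the theorem, extract the second fundamental form from the Gauss formula, impose the Wintgen-ideal-of-first-kind condition via Theorem 3.2, and finally solve the resulting ordinary differential equation for $r(v)$.

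First I would read the second fundamental form off from (\ref{E2}). Since $h(X_i,X_j)$ is the normal component of $\widetilde\nabla_{X_i}X_j$, the formulas (\ref{E2}) give $h(X_1,X_1)=a(v)N_1$, $h(X_2,X_2)=b(v)N_1$ and $h(X_1,X_2)=-a(v)N_2$; equivalently, the shape operators in the basis $\{X_1,X_2\}$ are
\[
A_{N_1}=\begin{pmatrix} a & 0\\ 0 & b\end{pmatrix},\qquad
A_{N_2}=\begin{pmatrix} 0 & -a\\ -a & 0\end{pmatrix},
\]
so $h_{11}^{1}=a$, $h_{22}^{1}=b$, $h_{12}^{2}=-a$ and all remaining coefficients vanish. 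In particular $A_{N_1}$ is already diagonal and $A_{N_2}$ already has the conformal shape $\left(\begin{smallmatrix}\lambda_2&\mu\\\mu&\lambda_2\end{smallmatrix}\right)$ with $\lambda_2=0$ and $\mu=-a$, so this frame is precisely of the type appearing in Theorem 3.2.

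Next I would impose the first-kind condition. By Theorem 3.2, $M$ is Wintgen ideal of first kind exactly when, in such a frame, $h_{11}^{1}-h_{22}^{1}=2\mu$ with $\mu$ the off-diagonal entry of $A_{N_2}$. Here this reads $a-b=2(-a)$, i.e. $b=3a$. The same conclusion can be reached through the equality case directly: from (\ref{B16}) the mean curvature vector is $\overrightarrow{H}=\tfrac{a+b}{2}N_1$, and combining $\left\Vert \overrightarrow{H}\right\Vert^{2}=\tfrac{(a+b)^2}{4}$ with $K=K_N=ab-a^2$ from Proposition 5.1, the equality $K+K_N=\left\Vert \overrightarrow{H}\right\Vert^{2}$ becomes $2(ab-a^2)=\tfrac{(a+b)^2}{4}$, which simplifies to $(b-3a)^2=0$; that $K_N=ab-a^2=2a^2\ge 0$ at $b=3a$ confirms this is the first, not the second, kind. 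Substituting the expressions (\ref{E3}) and clearing the common factor $(r^2+(r')^2)^{-1/2}$ turns $b=3a$ into $2(r')^2-rr''+r^2=3(r^2+(r')^2)$, that is
\[
r\,r''+(r')^2+2r^2=0.
\]
Setting $\rho=r^2$ and using $\rho''=2(r')^2+2rr''$, the left side equals $\tfrac12\rho''+2\rho$, so the equation is the linear oscillator $\rho''+4\rho=0$, with general solution $\rho(v)=c_1\cos(2v)-c_2\sin(2v)$; hence $r(v)=\pm\sqrt{c_1\cos(2v)-c_2\sin(2v)}$. Since every step is an equivalence, the converse follows by reversing them.

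I expect the main obstacle to be bookkeeping rather than depth: correctly extracting the shape operators from (\ref{E2}) and, above all, pinning down which of the two algebraic possibilities ($b=3a$ versus $b=-a$) is the first kind. The sign of $K_N$ in Proposition 5.1 is what decides this, so I would take care to check $K_N\ge 0$ at the solution. Once $b=3a$ is established, the remaining differential equation is a routine constant-coefficient linear ODE after the substitution $\rho=r^2$.
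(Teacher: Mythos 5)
Your proposal is correct and follows essentially the same route as the paper: read $h_{11}^1=a$, $h_{22}^1=b$, $\mu=-a$ off the frame equations (\ref{E2}), impose the first-kind condition from Theorem 3.2 to get $b=3a$, and reduce to the ODE $rr''+(r')^2+2r^2=0$ with solution (\ref{E5}). You actually supply more detail than the paper does (the substitution $\rho=r^2$ that linearizes the ODE, the cross-check via $K+K_N=\Vert\overrightarrow{H}\Vert^2$, and the sign check $K_N=2a^2\ge 0$ distinguishing the first kind), all of which is consistent with the paper's computation.
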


\begin{proof}
Suppose that the Vranceanu surface $M$ is given with the surface patch (\ref%
{E1}). If $M$ is a Wintgen ideal surface of first kind then by (\ref{C4})%
\begin{equation}
a(v)=\lambda _{1}+\mu ,b(v)=\lambda _{1}-\mu ,\mu =-a(v)  \label{E6}
\end{equation}%
holds. Further, from (\ref{E3}) and (\ref{E6}) we get%
\begin{equation*}
(r^{\prime }(v))^{2}+r(v)r^{\prime \prime }(v)+2r^{2}(v)=0
\end{equation*}%
which has a nontrivial solution (\ref{E5}). Conversely, if the equality (\ref%
{E5}) holds then the Vranceanu surface becomes Wintgen ideal surface of
first kind.
\end{proof}

\begin{theorem}
Let $M$ be a Vranceanu surface given with the surface patch (\ref{E1}).
Then, $M$ is Wintgen ideal surface of second kind if and only if $M$ is a
minimal surface satisfying 
\begin{equation}
r(v)=\pm \frac{1}{\sqrt{c_{1}\sin (2v)-c_{2}\cos (2v)}},  \label{E7}
\end{equation}%
where $c_{1}$ and $c_{2}$ are real constants.
\end{theorem}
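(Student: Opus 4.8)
The plan is to characterize Wintgen ideal surfaces of second kind via the shape-operator normal forms from Theorem 3.2, translate those into conditions on the functions $a(v)$, $b(v)$, and $\mu$ computed from the Vranceanu parametrization, and then solve the resulting ODE for $r(v)$. By Theorem 3.2 (equation (\ref{C4})), the second-kind condition means that, in the adapted frame, $h_{11}^{1}=\lambda_{1}-2\mu$ and $h_{22}^{1}=\lambda_{1}$, together with $h_{12}^{2}=\mu$. From the covariant-derivative computations (\ref{E2}) and the definitions (\ref{E3}), I can read off the components of the second fundamental form of the Vranceanu surface: $h(X_{1},X_{1})=a(v)N_{1}$, $h(X_{2},X_{2})=b(v)N_{1}$, and $h(X_{1},X_{2})=-a(v)N_{2}$. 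Hence the shape operators are $A_{N_{1}}=\mathrm{diag}(a(v),b(v))$ and $A_{N_{2}}=\begin{pmatrix} 0 & -a(v)\\ -a(v) & 0\end{pmatrix}$, so already $\lambda_{2}=0$ and $\mu=-a(v)$ in the off-diagonal entry.

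First I would match these against the second-kind normal form exactly as was done in the first-kind case (Theorem 5.2, equations (\ref{E6})): imposing $h_{11}^{1}=\lambda_{1}-2\mu$ and $h_{22}^{1}=\lambda_{1}$ forces $a(v)=\lambda_{1}-2\mu$ and $b(v)=\lambda_{1}$ with $\mu=-a(v)$. Eliminating $\lambda_{1}$ and $\mu$ gives a relation between $a(v)$ and $b(v)$ of the form $a(v)+b(v)=2\lambda_{1}-2\mu = b(v)+2a(v)$ is the wrong combination, so one instead obtains $a=b+2\mu=b-2a$, i.e. $b(v)+a(v)=0$. This last relation is exactly the minimality condition: since $\vec{H}=\tfrac{1}{2}\mathrm{tr}(A_{N_{1}})N_{1}=\tfrac{1}{2}(a+b)N_{1}$, vanishing trace is equivalent to $a(v)+b(v)=0$, which establishes the asserted equivalence with minimality.

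Next I would substitute the explicit expressions from (\ref{E3}) into $a(v)+b(v)=0$. Writing out $a(v)=(r^{2}+(r')^{2})^{-1/2}$ and $b(v)=\bigl(2(r')^{2}-r\,r''+r^{2}\bigr)(r^{2}+(r')^{2})^{-3/2}$ and clearing the common positive factor, the condition collapses to a second-order ODE in $r$, which I expect to take the form $2(r')^{2}-r\,r''+2r^{2}=0$ (paralleling the first-kind ODE $(r')^{2}+r\,r''+2r^{2}=0$, but with the sign pattern dictated by minimality rather than first-kind). I would then verify by direct substitution that $r(v)=\pm\bigl(c_{1}\sin(2v)-c_{2}\cos(2v)\bigr)^{-1/2}$ solves this ODE; the reciprocal-square-root ansatz is natural because differentiating $r^{-2}$ turns the equation into a linear constant-coefficient ODE for $r^{-2}$ with characteristic frequency $2$, whose general solution is a combination of $\sin(2v)$ and $\cos(2v)$. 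For the converse, substituting (\ref{E7}) back recovers $a(v)+b(v)=0$, so the surface is minimal and the second-kind normal form holds.

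The main obstacle is the ODE reduction step: one must be careful with the algebra when combining $a$ and $b$ over the common denominator $(r^{2}+(r')^{2})^{3/2}$ and when confirming which sign combination in the numerator of $b(v)$ yields the minimal rather than the first-kind equation, since the two cases differ only by signs and a misstep would produce the first-kind solution (\ref{E5}) instead. I would double-check the reduction by independently computing $\mathrm{tr}(A_{N_{1}})=a+b$ and setting it to zero, using the minimality characterization as a consistency cross-check against the direct normal-form matching, and then confirm the closed-form solution (\ref{E7}) by plugging it in rather than by integrating the ODE from scratch.
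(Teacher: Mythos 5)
Your overall strategy is exactly the paper's: read off $A_{N_1}=\mathrm{diag}(a,b)$ and $A_{N_2}$ with off-diagonal entry $-a$ from (\ref{E2}), match against the second-kind normal form in (\ref{C4}) to get $a=\lambda_1-2\mu$, $b=\lambda_1$, $\mu=-a$, reduce to a single ODE for $r$, and exhibit (\ref{E7}) as its solution; your added observation that the resulting relation $a+b=0$ is precisely minimality (vanishing of $\mathrm{tr}\,A_{N_1}$) is a genuine improvement, since the paper asserts minimality in the statement but never verifies it in the proof. However, two of your intermediate computations are wrong, and one of them matters. First, the elimination chain ``$a=b+2\mu=b-2a$'' should read $a=\lambda_1-2\mu=b-2\mu=b+2a$, which does give $a+b=0$ (your written chain would give $3a=b$, the first-kind relation). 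Second, and more seriously, the ODE you ``expect,'' namely $2(r')^2-r\,r''+2r^2=0$, is not the correct reduction of $a+b=0$: clearing the factor $(r^2+(r')^2)^{3/2}$ gives $\bigl(r^2+(r')^2\bigr)+\bigl(2(r')^2-r\,r''+r^2\bigr)=3(r')^2-r\,r''+2r^2=0$, which is the paper's (\ref{E9}). Your own linearization argument confirms this: for an equation $A(r')^2-r\,r''+2r^2=0$ the substitution $w=r^{-k}$ removes the $(r')^2$ term only when $k=A-1$, so frequency $2$ (i.e. $w''+4w=0$ with $w=r^{-2}$, matching (\ref{E7})) occurs for $A=3$, not $A=2$; with your coefficient the claimed solution (\ref{E7}) does not satisfy the ODE. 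Since you explicitly plan to verify (\ref{E7}) by direct substitution, that check would catch the slip, but as written the proposal's ODE and its solution are mutually inconsistent and the coefficient must be corrected to $3$.
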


\begin{proof}
Suppose that the Vranceanu surface $M$ is given with the surface patch (\ref%
{E1}) is Wintgen ideal surface of first kind then by the use of (\ref{C4})
the following equalities hold;%
\begin{equation}
a(v)=\lambda _{1}-2\mu ,\text{ }b(v)=\lambda _{1},\text{ }\mu =-a(v)
\label{E8}
\end{equation}%
Further, from(\ref{E3}) and (\ref{E8}) we get 
\begin{equation}
3(r^{\prime }(v))^{2}-r(v)r^{\prime \prime }(v)+2r^{2}(v)=0  \label{E9}
\end{equation}%
which has a nontrivial solution (\ref{E7}). Conversely, if the equality(\ref%
{E7}) holds then the Vranceanu surface becomes Wintgen ideal surface of
second kind.
\end{proof}

\begin{corollary}
Let $M$ a Vranceanu surface given with the surface patch (\ref{E1}). If $M$
is semi-parallel then $M$ is a flat surface satisfying $%
r(v)=c_{1}e^{c_{2}v}. $
\end{corollary}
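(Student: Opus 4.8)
The plan is to compute the second fundamental form of the Vranceanu surface explicitly, substitute it into Lemma 4.1, and then impose the semi-parallel condition. First I would read off the coefficients $h_{ij}^{\alpha}$ from the covariant derivative formulas (\ref{E2}) through the Gaussian formula (\ref{B2}): the tangential part of each $\widetilde{\nabla}_{X_i}X_j$ contributes to $\nabla_{X_i}X_j$, while the normal part is exactly $h(X_i,X_j)$. This gives $h(X_1,X_1)=a(v)N_1$, $h(X_2,X_2)=b(v)N_1$ and $h(X_1,X_2)=-a(v)N_2$, so the only nonzero coefficients are $h_{11}^{1}=a$, $h_{22}^{1}=b$ and $h_{12}^{2}=-a$.

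Next I would insert these values into the first identity of Lemma 4.1. Since $h_{11}^{2}=h_{12}^{1}=0$, the sum $\sum_{\alpha}h_{11}^{\alpha}h_{12}^{\alpha}$ vanishes, and the remaining coefficient collapses by means of Proposition 5.1: using $K=a(b-a)$ from (\ref{E4}) one finds $\sum_{\alpha}h_{11}^{\alpha}(h_{22}^{\alpha}-h_{11}^{\alpha})+2K=a(b-a)+2K=3K$. Hence
\begin{equation*}
(\overline{R}(X_1,X_2)\cdot h)(X_1,X_1)=3K\,h(X_1,X_2)=-3a(v)K\,N_2 .
\end{equation*}
The semi-parallel condition $\overline{R}\cdot h=0$ makes the left-hand side vanish, and since $r(v)\neq 0$ the function $a(v)=\left(r^2+(r')^2\right)^{-1/2}$ is nowhere zero; therefore $K=0$, i.e. $M$ is flat.

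It then remains to translate flatness into the stated ODE. By (\ref{E4}) we have $a(b-a)=0$, and as $a\neq 0$ this forces $b(v)=a(v)$. Writing this out with (\ref{E3}) and clearing the common factor $\left(r^2+(r')^2\right)^{-1/2}$ reduces the equation to $(r')^2=r\,r''$. Recognising this as $\frac{d}{dv}\!\left(\frac{r'}{r}\right)=\frac{r''r-(r')^2}{r^2}=0$, I conclude that $r'/r$ equals a constant $c_2$, and a single integration yields $r(v)=c_1e^{c_2v}$.

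I expect the only real work to be bookkeeping: correctly extracting $h_{ij}^{\alpha}$ from (\ref{E2}) and performing the algebraic simplification that turns the Lemma 4.1 coefficient into $3K$. The decisive point is the identity $a(b-a)=K$ supplied by Proposition 5.1, which renders the $(X_1,X_1)$-component proportional to $K$ alone; once $K=0$ is forced, the residual equation $(r')^2=r\,r''$ is elementary and presents no analytic obstacle.
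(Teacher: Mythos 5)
Your proposal is correct and follows essentially the same route as the paper: compute the coefficients $h_{ij}^{\alpha}$ from (\ref{E2}), substitute into Lemma 4.1, impose $\overline{R}\cdot h=0$ to force $a(v)=b(v)$ (hence $K=0$ by (\ref{E4})), and integrate $r\,r''=(r')^{2}$ to get $r=c_{1}e^{c_{2}v}$. The only difference is cosmetic: you extract everything from the single $(X_{1},X_{1})$-component (whose coefficient you neatly identify as $3K$), whereas the paper writes out all three components before drawing the same conclusion.
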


\begin{proof}
Suppose the Vranceanu surface $M$ is semi-parallel then by the use of (\ref%
{D2}) with (\ref{E2}) we get%
\begin{eqnarray*}
(\overline{R}(X_{1},X_{2})\cdot h)(X_{1},X_{1}) &=&\left( 3a^{2}(v)\left(
a(v)-b(v)\right) \right) N_{2} \\
(\overline{R}(X_{1},X_{2})\cdot h)(X_{1},X_{2}) &=&\left( a(v)\left(
a(v)-b(v)\right) (2a(v)-b(v))\right) N_{1} \\
(\overline{R}(X_{1},X_{2})\cdot h)(X_{2},X_{2}) &=&a(v)\left(
3a(v)b(v)-2a(v)^{2}-b(v)^{2}\right) N_{2}.
\end{eqnarray*}%
Suppose that, $M$ is semi-parallel then by (\ref{D1}) $(\overline{R}%
(X_{1},X_{2})\cdot h)(X_{i},X_{j})=0,$ $(1\leq i,j\leq 2).$ Which implies
that $a(v)-b(v)=0.$ So, by (\ref{E4}) $K=K_{N}=0.$ Further, from (\ref{E3})
we get the result.
\end{proof}

\end{document}